\definecolor{gray}{gray}{.8}
\date{}
\newdimen\@myBoxHeight%
\newdimen\@myBoxDepth%
\newdimen\@myBoxWidth%
\newdimen\@myBoxSize%
\DeclareRobustCommand{\SquareBox}[3]{%
	 \settoheight{\@myBoxHeight}{#3}
    \settodepth{\@myBoxDepth}{#3}
    \settowidth{\@myBoxWidth}{#3}
    \pgfmathsetlength{\@myBoxSize}{max(\@myBoxWidth,(\@myBoxHeight+\@myBoxDepth))}%
	\protect\tikz \node [shape=rectangle, color=white, text=white, minimum size=\@myBoxSize,
    			 fill=#2, inner sep=0, outer sep=0pt] 
    			 {{\fontfamily{phv}\selectfont \normalsize \textcolor{white}{#1}}};%
}%
\newcommand\biog{\list{}{\topsep18pt plus2pt minus2pt
	\leftmargin0pt\labelwidth0pt\labelsep.3em
	\parsep0pt\listparindent\parindent
	\itemsep18pt plus2pt minus2pt
	\def\makelabel##1{\hskip\labelsep\bfseries\MakeUppercase{##1}}}%
	\let\affil\biogaffil\let\endaffil\endbiogaffil\footnotesize}
\def\affil{\list{}{\partopsep0pt\topsep12pt plus2pt minus2pt
	\leftmargin0pt\labelwidth0pt\labelsep0pt
	\parsep0pt\listparindent0pt
	\itemsep\medskipamount
	\parsep0pt
	}\footnotesize
	\item[]\itshape}
\let\endaffil\endlist
\def\biogaffil{\list{}{\partopsep0pt\topsep0pt
	\leftmargin0pt\labelwidth0pt\labelsep0pt
	\parsep0pt\listparindent0pt
	\itemsep\medskipamount
	\parsep0pt
	}\footnotesize
	\item[]\itshape}
\let\endbiogaffil\endlist
\theoremstyle{theorem}
\newtheorem{theorem}{Theorem}
\newtheorem{lemma}[theorem]{Lemma}
\newtheorem{corollary}[theorem]{Corollary}
\theoremstyle{definition}
\newtheorem{remark}[theorem]{Remark}
\newtheorem{proposition}[theorem]{Proposition}
\newtheorem{strategy}[theorem]{Strategy}
\definecolor{wgray}{RGB}{30,30,30}
\definecolor{wgreen}{RGB}{106,170,100}
\definecolor{wyellow}{RGB}{201,180,88}
\DeclareRobustCommand{\wordlify}[2]{
	{\protect
	\hspace{-15pt}
	\raisebox{-5pt}{
		\foreach \l [count=\c, evaluate=\c as \co using {#2[\c-1]}] in #1 {
			\ifnum \co=0
				\SquareBox{\l}{wgray}{3pt}
			\else
				\ifnum \co=1
					\SquareBox{\l}{wyellow}{3pt}%
				\else
					\SquareBox{\l}{wgreen}{3pt}%
				\fi
			\fi
			\hspace{-4pt}
		}
	}
	}
}
\title{Playing Mastermind with \\Wordle-like Feedback}
\author{Renyuan Li\thanks{Department of Mathematics, Nanjing University, ryanlry9@gmail.com},  Shenglong Zhu\thanks{Department of Mathematics, Nanjing University, 502023210036@smail.nju.edu.cn}
  }
\begin{document}

\maketitle

\begin{abstract}
	We introduce an extension of Mastermind called Clear Mastermind with enhanced feedback inspired by that from Wordle. 
	The only difference between Clear Mastermind and Mastermind is a rule that provides more precise feedback, as found in Wordle. In Clear Mastermind, the feedback contains the positions of the colors the codebreaker guessed correctly and the positions of colors that appear in the answer but in different positions. 
	We explore the fewest number of guesses that a codebreaker requires to find the answer in Clear Mastermind according to its two parameters: the number of colors and the length of the answer.
\end{abstract}
\section*{Introduction. }
\subsection{Background}
\textit{Mastermind} is a popular code-breaking game for two players. One player, the codemaker, 
selects a secret color sequence of four pegs from six possible colors.
The other player, known as the codebreaker, attempts to deduce the codemaker's sequence with minimal 
guesses by utilizing corresponding feedback.
Each guess is a sequence of four colored pegs picked from the six color alternatives.
The feedback includes the number of pegs the codebreaker guessed correctly and the number of pegs that are of the right color yet not in the right position.
The game has been studied from a computational perspective \cite{doerr2016playing,el2018query,larcher2021solving,martinsson2020mastermind}.

\textit{Wordle} is a single-player code-breaking game where the player tries to guess a five-letter English word chosen by the computer.
It is similar to Mastermind where the player tries to guess a color combination.
In Wordle, however, the player must guess a valid five-letter word in English for each guess.
The feedback in Wordle is more specific than in Mastermind, as it provides the exact positions of 
correctly guessed letters and the letters that appear in the answer but in different positions.
The online game has gained immense popularity and many studies have explored different ways to
interpret the game  \cite{bertsimas2022exact,short2022winning}.
It is worth noting that the game has been proven to be NP-complete \cite{lokshtanov_et_al:LIPIcs.FUN.2022.19,rosenbaum2022finding}. 
Specifically, the possible target words and allowable guesses in Wordle are restricted sets. The game we define in this article is a special case of Wordle in which target words and allowable guesses contain all possible sequences of $k$ characters. 

Additionally, Hamkins \cite{hamkins2022infinite} has studied the infinite number of variations of both Wordle and Mastermind, 
with some of his ideas overlapping with the approaches used in this article.
While Hamkins' work is more related to axiomatic set theory, this article focuses  on combinatorial problems.

In this paper, we introduce \textit{Clear Mastermind}, which combines aspects of both Mastermind and Wordle. 
Conceptually, Clear Mastermind is Mastermind with Wordle's feedback. 

We define \textit{Clear Mastermind} with $n\ge1$ \textit{positions} and $k\ge1$ \textit{letters} representing the colors as a two-player game as follows. 
Let us denote by $[k]$ the set $\{1,2,\dots,k\}$ of positive integers.
  One player, the codemaker, initially selects a secret answer ${y =(y_1,y_2,\dots, y_n) \in [k]^n}$ in letters from an alphabet $[k]$  
  to be the answer in the game,
    with $y_i$ being the letter on the $i$th position of the answer ($i$th letter of the answer for short).
    The other player, the codebreaker, attempts to find the answer by guessing a series of words.
	 In each round, the codebreaker guesses a word ${x = (x_1,x_2,\dots, x_n) \in [k]^n}$.
	 The codemaker then provides feedback by coloring each letter of the word either Green, Yellow, or Black 
	 based on how each letter matches the secret answer. 
	 The feedback provides the codebreaker with the commonalities between the secret answer $y$ and the guessed word $x$ in each round,
	 allowing them to narrow down the list of words possible to be the answers and make better guesses in the following rounds.
	  The feedback rules will be further explained in the following section. 
	  If the codebreaker's guess matches the answer in the $i$th round, we say that the codebreaker finds the answer 
	  in $i$ rounds or that the codebreaker solves the puzzle in $i$ rounds.

\subsection{Feedback Rules}
First, any letter of $x$ that matches with a letter of $y$ such that $x_i = y_i$ is colored Green. Next, consider the unmatched letters
in $x$ and $y$. For each $j \in [k]$, if $j$ has $n_j$ unmatched occurrences in $y$ and $m_j$ in $x$, then the first 
$\min(n_j,m_j)$ unmatched occurrences of $j$ in $x$ are colored Yellow. All remaining entries of $x$ are colored Black.
Two examples of Clear Mastermind are illustrated below.
\begin{center}
    \includegraphics[]{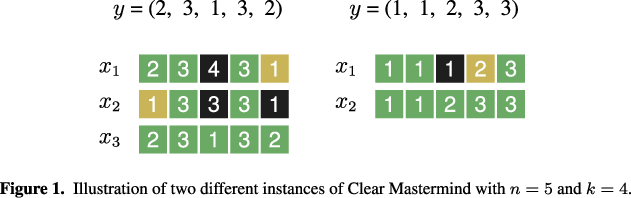}

\end{center}


			

For convenience,  we denote the feedback by a sequence made up of Green(G), Yellow(Y), and Black(B).
Let $\pi(x\mid y) \in \{\text{G,Y,B}\}^n$ denote the feedback for the guess $x$ given the answer $y$.
Define the \textit{answer set} $P_i$
\begin{align*}
    &P_0 = [k]^n,\\
    &P_i = \big\{ w \in P_{i-1}: \pi(x^{(i)}\mid w) = \widetilde{\pi}^{(i)} \big\}, i \in \mathbb{N},
\end{align*}
where $x^{(i)}$ and $\widetilde{\pi}^{(i)}$ is the guess and the feedback of the $i$th round respectively.

The answer set $P_i$ is a function of the previous guesses and feedback.
Intuitively, the answer set $P_i$ is the set of words that are still possible to be the secret answers for the codebreaker after $i$ rounds. 
By definition, we have $P_{i+1} \subset P_i, \text{ where } {i \in \mathbb{N}}$. \\
Here is an example to explain the answer set:
\begin{itemize}
	\item For $k = 3,n = 3$, the codebreaker guesses (1,2,3) in the first round and receives the feedback $\pi = $ (G,G,B), 
	then the answer set $P_1$ would be \{(1,2,1),(1,2,2)\}. In the second round, the codebreaker guesses (1,2,2) and receives the 
	feedback $\pi = $ (G,G,B), then the answer set $P_2$ would be \{(1,2,1)\}, which means that the answer is (1,2,1). The codebreaker finds the answer in 3 rounds then.
\end{itemize}
\subsection{Our Contribution}
Define $F(k,n)$ as the fewest number of guesses that the codebreaker requires to find the answer in Clear Mastermind with $n$ positions and $k$ letters.

In Clear Mastermind, the codebreaker can guess by using the following strategy. In the $j$th ($j<k$) round, 
the codebreaker guesses ($j,j,\dots, j$)$\in [k]^n$. After the codebreaker receives the feedback in the $(k-1)$th round, 
the answer set $P_{k-1}$ contains only one word and the correct answer is deduced. So, the codebreaker can solve the puzzle in the $k$th round, 
which means$\text{ for all } k,n \in \mathbb{N},$ 
\begin{equation}
	F(k,n)\le k \tag{1}
	\label{equation:1}
\end{equation}
In this work, we prove the following results:
\begin{theorem}
\label{thm:1}
\begin{enumerate}
	\item If $k\le n $,\[ F(k,n) = k.\]
	\item If $k > n$ and $n$ divides $k$,\[F(k,n) = \dfrac{k}{n}+n-1. \]
	\item If $k>n$ and $n$ does not divide $k$, \[ \left\lfloor \frac{k}{n} \right\rfloor +n-1\le F(k,n)\le \left\lfloor \frac{k}{n} \right\rfloor +n.\]
\end{enumerate}
\end{theorem}
The main results show that finding an optimal strategy in Clear Mastermind is nearly trivial, while in Wordle where guesses and target words are restricted, finding an optimal strategy is computationally intractable \cite{lokshtanov_et_al:LIPIcs.FUN.2022.19,rosenbaum2022finding}.

We prove the three cases in the following three sections.

\section{The First Case.}
\label{chap:firstcase}
In the case $k\le n$ , in order to prove Theorem~\ref{thm:1}, it is sufficient to prove $F(k,n)\ge k$ from the inequality~\eqref{equation:1}.

It is challenging to deal with feedback that includes Yellow because it provides information that is hard to utilize. 
One intuitive idea is that if we know the frequency of each letter $j \in [k]$ in the secret answer $y$, then the feedback of Yellow is unnecessary. The formal expression is discussed later in Proposition~\ref{prop:eqivalent}.
Define the feedback 
\[\tau(x\mid y) := \big( \tau_1(x\mid y),\tau_2(x\mid y),\dots,\tau_n(x\mid y) \big ) \in \{\text{\Checkmark,\XSolidBrush} \}^n\]
with $\tau_i(x\mid y) = $ \Checkmark if $x_i = y_i$ and $\tau_i(x\mid y) = $ \XSolidBrush if $x_i \ne y_i$.
We show that the feedback $\pi(x\mid y)$ is equivalent to the feedback $\tau(x\mid y)$ if we know the frequency 
of the letters in $y$.

Given a word $y$, let $\omega(y) = \big(\omega_1(y),\omega_2(y),\dots, \omega_k(y) \big)\in \mathbb{N}^k$ be the \textit{frequency vector} of $y$, 
where $\omega_j(y)$ is the number of occurrences of letter $j$ in $y$.

\begin{proposition}
\label{prop:eqivalent}
	If the codebreaker guesses a word $x$, and knows the frequency vector of the secret answer $y$, then the codebreaker can deduce $\tau(x\mid y)$ based on the provided $\pi(x\mid y)$, and vice versa, the codebreaker can deduce $\pi(x\mid y)$ given $\tau(x\mid y)$.
\end{proposition}

\begin{proof}
	On the one hand, it is easy to obtain $\tau(x\mid y)$ when the codebreaker knows $\pi(x\mid y)$. Convert every Green in $\pi(x\mid y)$ to  \Checkmark in $\tau(x\mid y)$
	and every Yellow and every Black to \XSolidBrush, and the codebreaker obtains the feedback $\tau(x\mid y)$.

	On the other hand, given $\tau(x\mid y)$, the codebreaker may obtain $\pi(x\mid y)$ as follows. Convert every \Checkmark in $\tau(x\mid y)$ to  Green in $\pi(x\mid y)$. 
	Next, for each $j \in [k]$, let $l_j$ count the number of Green occurrences of $j$ in $x$. 
	Color the leftmost $\min(\omega_j(y),\omega_j(x)-l_j)$ occurrences of $j$ in $x$ Yellow.
	Color the remaining entries of $x$ black. The codebreaker obtains the correct feedback $\pi(x\mid y)$.
\end{proof}
Thus, in the case that $n = k$ and the answer is known as a permutation of $\{1,2,\dots,k\}$, we consider the game that the codemaker alternatively provides the codebreaker with feedback $\tau(x\mid y)$, which has the same difficulty with the one that the codemaker provides the feedback $\pi(x\mid y)$.
Define $G(k)$ as the fewest number of guesses that a codebreaker requires to find the answer 
in the case that $n = k$ and the answer is known to be a permutation of $\{1,2,\dots,k\}$, while receiving the feedback $\tau(x\mid y)$ from the codemaker.

In the case $n = k$, we know that
 $$F(k,k)\ge G(k)$$
because in the game with $k$ positions, $k$ letters, and the public information that the answer is a permutation of $\{1,2,\dots,k\}$, the codebreaker can pretend not to know the public information and then solve it within $F(k,k)$ rounds.

In the case $k < n$, let $y = (y_1,y_2,\dots, y_n)$ be the answer for a game with $k$ positions and $n$ letters. 
We, as the codemaker, set $y_j = 1 $ if $j > k$ and set $(y_1,y_2,\dots, y_k)$ to be a permutation of $\{1,2,\dots,k\}$. 
Make the information above public. Under such circumstances, 
the codebreaker still needs at least $G(k)$ rounds to solve the game in the case $k<n$, which means
\[F(k,n)\ge G(k), k<n.\]
Thus, the first case of Theorem~\ref{thm:1} can be deduced from the following proposition.

\begin{proposition}
\label{prop:1}
$G(k)\ge k, \text{ for all } k \in \mathbb{N}^*$.
\end{proposition}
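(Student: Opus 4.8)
The plan is to prove $G(k) \geq k$ via an adversary argument: the codemaker does not fix the answer in advance but maintains a set of candidate permutations consistent with all feedback given so far, and answers each query with \XSolidBrush{} (mismatch) in every position whenever this is consistent with at least two surviving candidates. We must show the codebreaker cannot guarantee to have pinned down the permutation after only $k-1$ all-\XSolidBrush{} replies. First I would observe that after the codebreaker's guesses $x^{(1)}, \dots, x^{(k-1)}$, if every feedback is all-\XSolidBrush, then a permutation $z$ survives if and only if $z_i \neq x^{(t)}_i$ for every $t \in [k-1]$ and every $i \in [n]$. Equivalently, forbidding for each position $i$ the set $\{x^{(1)}_i, \dots, x^{(k-1)}_i\}$ of at most $k-1$ values, a permutation survives iff it avoids all these forbidden entries. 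So the combinatorial heart is: after forbidding at most $k-1$ values in each of the $k$ columns (positions), must there remain at least two permutations of $[k]$ avoiding all forbidden cells?

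The key step is a permanent / system-of-distinct-representatives estimate. Model the allowed placements as a bipartite graph (or $0$-$1$ matrix $A$) where $A_{ij} = 1$ iff value $j$ is allowed in position $i$; then each row and each column of $A$ has at least $k - (k-1) = 1$ one, but more usefully each row has at least... no — I need a sharper count. The right statement: each position forbids at most $k-1$ of the $k$ values, so each row of $A$ has at least one $1$; symmetrically each value is forbidden in at most $k-1$ positions so each column has at least one $1$. That alone only gives $\operatorname{perm}(A) \geq 1$. To get $\geq 2$ I would argue more carefully using the fact that the $k-1$ guesses are \emph{permutations}: in guess $x^{(t)}$ the forbidden pairs $(i, x^{(t)}_i)$ form a perfect matching, so the forbidden set is a union of $k-1$ perfect matchings, i.e. $A = J - M$ where $M$ is the adjacency matrix of a bipartite graph that is the edge-disjoint union of $k-1$ perfect matchings — a $(k-1)$-regular bipartite graph on $k+k$ vertices. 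Its complement $A$ is therefore a $1$-regular bipartite graph, a perfect matching! Wait — that would give exactly one surviving permutation, which is too strong; but this only happens if the $k-1$ matchings are edge-disjoint. If the guesses are not edge-disjoint as matchings, $A = J - M$ where $M$ has maximum degree $\leq k-1$ but fewer than $k-1$ edges at some vertex, so $A$ has a vertex of degree $\geq 2$, and then I would invoke that a bipartite graph with all degrees $\geq 1$ and some degree $\geq 2$, arising as the complement of a union of matchings, has at least two perfect matchings. So the proof splits: either the codebreaker's $k-1$ guesses are pairwise "disjoint" permutations (no two agree in any coordinate), in which case $A$ is itself a perfect matching and a \emph{different} adversary reply earlier would have kept options open; or they are not, and a counting/augmenting-path argument yields two surviving permutations.

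The main obstacle I anticipate is the edge-disjoint case: if the codebreaker manages to play $k-1$ pairwise-disjoint permutations and the adversary is forced to say all-\XSolidBrush{} each time, then formally only one permutation of $[k]$ remains and the codebreaker wins on move $k$ — but that is fine, that still needs $k$ guesses, so $G(k) \geq k$; the danger is instead that the codebreaker finishes in \emph{fewer} than $k-1$ informative rounds by forcing a non-all-\XSolidBrush reply. So the adversary strategy needs to be: reply all-\XSolidBrush{} as long as $\geq 2$ candidates survive doing so, and I must show this is possible through round $k-1$. I would prove by induction on the round number $t \leq k-1$ that after $t$ all-\XSolidBrush{} replies the surviving set has size $\geq (k-t)! / (\text{something})$, or more cleanly size $\geq k - t + 1$ or even just $\geq 2$, using the permanent bound $\operatorname{perm}(A) \geq \prod_i (r_i - i + 1)$-type inequalities (a consequence of defect Hall / the Ostrand–Hall bound) where $r_i \geq k - (t) \geq k-(k-1) = 1$... that bare bound is again too weak, so the real work is exploiting the matching structure of the forbidden sets to boost it, exactly as sketched above. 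I expect the cleanest writeup chooses the adversary to forbid, in column $i$, precisely the multiset $\{x^{(1)}_i,\dots,x^{(t)}_i\}$ and then directly exhibits two distinct survivors by a greedy/augmenting construction rather than invoking a named permanent inequality.
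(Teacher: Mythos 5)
There is a genuine gap, and it sits exactly where the paper has to work hardest. Your adversary is defined only as ``reply all-\XSolidBrush{} as long as at least two candidates survive,'' and you acknowledge you ``must show this is possible through round $k-1$.'' It is not: the codebreaker may guess the constant string $(1,1,\dots,1)$, and since the answer is a permutation of $[k]$ the value $1$ must occur somewhere, so an all-\XSolidBrush{} reply is inconsistent with \emph{every} candidate already in round~1. Your proposal has no rule for such rounds and no argument that whatever \Checkmark's the adversary is then forced to emit do not collapse the candidate set. Relatedly, your structural claim that the forbidden pairs $(i,x^{(t)}_i)$ of a single guess form a perfect matching is false unless $x^{(t)}$ is a permutation, which the codebreaker is never required to play; so the ``$A=J-M$ with $M$ a union of $k-1$ perfect matchings'' picture, and the regularity/complement argument you build on it, does not apply. (Your observation that each \emph{value} is forbidden in at most $k-1$ positions also does not follow by counting --- one constant guess can forbid a value in many positions --- it only follows from nonemptiness of the survivor set, which is what is being proved.) Finally, the core quantitative step, that at least two permutations survive, is left as a hoped-for augmenting-path or permanent inequality rather than proved.

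The paper closes both gaps as follows. The codemaker's strategy is a \emph{per-position} greedy within each round: at position $i$ it answers \XSolidBrush{} iff some still-surviving candidate disagrees with the guess there (removing the agreeing candidates), and \Checkmark{} only when all survivors already agree. A tracking matrix records the $(\text{position},\text{value})$ pairs excluded by \XSolidBrush's; since each round excludes at most one value per position, after $k-2$ rounds every position still has at least two admissible values. The paper's Lemma~2 is the key point you are missing: under this greedy strategy the \Checkmark's carry no information beyond the matrix, so the answer set is \emph{exactly} the set of permutations avoiding the excluded cells, i.e.\ its size equals the permanent of the tracking matrix. Then Lemma~1/Corollary~1 (a $0$--$1$ matrix with nonzero permanent and at least two $1$'s in each column has permanent at least $2$) finishes the proof --- note this needs the degree condition on one side only, so no assumption that the guesses are permutations is ever required. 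If you want to salvage your writeup, you must (i) specify the adversary's reply when all-\XSolidBrush{} is infeasible and (ii) prove the analogue of Lemma~2 for it; the bare ``all-\XSolidBrush{} if possible'' adversary does not suffice.
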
 
\label{sec:21}
In the remaining content of the section, we will always focus on the 
Clear Mastermind game with $k$ positions, $k$ letters, and the public information that the answer is a permutation of $\{1,2,\dots,k\}$.
The codemaker provides the codebreaker with feedback $\tau(x \mid y)$.

For any codebreaker, we, as the codemaker, aim to find a word to be the answer and prove that the codebreaker 
cannot solve the puzzle in $k-1$ rounds.  In fact, we do not have to commit to a definite answer of the game in advance. 
As the game proceeds, we can choose another word as the answer of the game, as long as it remains consistent with all the 
previous guesses and the feedback. 

As a result, the codemaker only needs to provide feedback by using a specific strategy after each round and ensure that the answer set 
is not empty. The less information from the feedback that is given to the codebreaker, the better the strategy. 
Intuitively, the specific strategy is to provide \XSolidBrush \ by a greedy algorithm. It checks whether the next position of the feedback can be filled with \XSolidBrush without contradictions. If it can, fill with \XSolidBrush and go on to the next position; otherwise, fill with \Checkmark and go on.

Before providing a more detailed explanation, we extend the definition of the answer set. 
Define the \textit{Q-answer set} $Q_{i,j}$ 
\begin{align*}
    &Q_{1,0} = [k]^n,\\
    &Q_{i,0} = Q_{i-1,n}, i\in \{2,\dots,n\},\\
    &Q_{i,j} = \big\{w \in Q_{i,j-1}: \tau_j(x^{(i)}\mid w) = \widetilde{\tau_j}^{(i)}\big\},i \in [n],j\in[n],
\end{align*}
where $x^{(i)}$ and $\widetilde{\tau}^{(i)} = (\widetilde{\tau_1}^{(i)},\widetilde{\tau_2}^{(i)},\dots,\widetilde{\tau_n}^{(i)})$ are the guess and the feedback of the $i$th round respectively.

Intuitively, $Q_{i,j}$ is the set of words possible to be answers considering the guesses and
 the feedback in the initial $i-1$ rounds, and the guesses and the feedback in the initial $j$ positions of the $i$th round. 
 Notice that $P_i = Q_{i,n} = Q_{i+1,0}$. An example is shown below:
\begin{itemize}
	\item In a Mastermind game with $k = 3,n = 3$, the codebreaker guesses (1,1,1) in the first round, and the feedback is (\XSolidBrush,\Checkmark,\XSolidBrush). In the second round, the codebreaker guesses (2,1,2), and if the feedback is (\Checkmark,\Checkmark,\XSolidBrush), then the results are
\begin{align*}
&P_1= Q_{1,3} =  \big\{(2,1,2),(2,1,3),(3,1,2),(3,1,3)\big\}.\\
&Q_{2,1} = Q_{2,2} =  \big\{(2,1,2),(2,1,3)\big\}. \\
&Q_{2,3} = P_2 = \big\{(2,1,3)\big\} .
\end{align*}
\end{itemize}
 Now, we describe the specific strategy.



\begin{strategy}
\label{str}
We, the codemaker, provide the feedback $\tau = (\tau_1,\tau_2,\ldots,\tau_n)$ by the following method 
after receiving the guess $x$ from the codebreaker in the $i$ th round: 

For $\tau_1$, we search if there exists a word $a = (a_1,a_2,\ldots,a_n)\in P_{i-1}$ such that ${a_1 \ne x_1}$. If such an $a$ exists, 
set $\tau_1 = $ \XSolidBrush
and update the last answer set $Q_{i,1}$ to be those in $ P_{i-1}$ whose first letter is not $x_1$. 
$Q_{i,1}$ is the possible answer set currently.
Else, for every word $a \in P_{i-1}$, we have $a_1 = x_1$. Set $\tau_1 = $ \Checkmark
and the lastest answer set $Q_{i,1}$ equals $P_{i-1}$. 

Similarly, for $\tau_j, j\in \{2,3,\ldots,n\}$, 
we search to see if there exists word $a\in Q_{i,j-1}$ such that $a_j \ne x_j$. If such a word exists, set $\tau_j =$  \XSolidBrush
and update the lastest answer set \[Q_{i,j} = \big\{a\in Q_{i,j-1}: a_j \ne x_j \big\}.\] 
Else, set $\tau_j = $ \Checkmark 
and update the lastest answer set \[Q_{i,j} = \big\{a\in Q_{i,j-1}: a_j = x_j \big\} = Q_{i,j-1}.\]
The feedback $\tau = (\tau_1,\tau_2,\dots,\tau_n)$ is produced.
\end{strategy}



\begin{proposition}
\label{prop:2}
If the codemaker provides feedback by using Strategy~\ref{str}, 
the answer set will contain more than one word after $k-2$ rounds, i.e. $|P_{k-2}|>1$.
\end{proposition}
\begin{remark}
\label{remark:1}
It can be inferred from Proposition~\ref{prop:2} that there are at least two words to choose in the $(k-1)$st round, which means that any codebreaker cannot solve the puzzle in $k-1$ rounds facing Strategy~\ref{str}. Then we have $G(k)\ge k$.
Consequently, Proposition~\ref{prop:1} can be deduced from Proposition~\ref{prop:2}.
\end{remark}

We are now going to prove Proposition~\ref{prop:2}. We first do some preparations.

We use a 0-1 matrix $A^{(r)} = (a^{(r)}_{ji})_{k\times k}$ to track the codebreaker's guesses and the codemaker's feedback 
(tracking matrix for short) in the game after $r$ rounds. It encodes the set $P_r$, as we will prove later, and is easier to handle.

We define the matrix recursively. Initially, the values in the matrix $A^{(0)}$ are all 1, which means $a^{(0)}_{ji} = 1, \text{ for all } i,j \in [k]$.
In the $r$th round, the codebreaker provides a guess $x = (x_1,x_2,\dots, x_k) \in [k]^k$. Let us denote the feedback 
by $\tau = (\tau_1,\tau_2,\dots, \tau_k) \in\{\text{\Checkmark , \XSolidBrush}\}^k$. 
For each $i\in [k]$, if $\tau_i = $ \XSolidBrush , then set $a^{(r)}_{x_i,i}$ as 0. 
Other elements in $A^{(r)}$ remain the same as the corresponding elements in $A^{(r-1)}$.

For example, when $ k = n = 4$, a codebreaker guessed (4,2,1,3), and the feedback is (\Checkmark,\Checkmark,\XSolidBrush,\XSolidBrush); the matrix $A^{(1)}$ is then

\begin{center}

\begin{tabular}{|l|c|c|c|c|}
\hline
\rowcolor[gray]{.9}
\diagbox{letter}{position}&1&2&3&4\\
\hline 1&1&1&0&1\\
\hline 2&1&1&1&1\\
\hline 3&1&1&1&0\\
\hline 4&1&1&1&1\\
\hline
\end{tabular}
\end{center}
.We introduce the connections between $A^{(r)}$ and $P_r$.
\begin{lemma}
	\label{lemma:2}
	If the codemaker provides feedback using Strategy~\ref{str}, then for each permutation $\sigma\in S_k$, for all $r<k$,
	the word $y_0 = \big(\sigma(1),\sigma(2),\dots,\sigma(k)\big)$ exists in the answer set $P_r$ if and only if
	\[a^{(r)}_{\sigma(1),1} = a^{(r)}_{\sigma(2),2} = \dots = a^{(r)}_{\sigma(k),k} = 1,\]
	where $S_k$ is a symmetric group over all permutations of the numbers $1,2,\dots,k$, $\sigma(i)$ is the $i$th value in $\sigma$, and $A^{(r)} = (a^{(r)}_{ji})_{k\times k}$.
\end{lemma}
	
\begin{proof}
	It is trivial to see that every word $y = \big(\sigma(1),\sigma(2),\dots,\sigma(k)\big)$ in the answer set $P_r$ 
	maps to a permutation $\sigma$ in $S_k$ 
	that satisfies $a^{(r)}_{\sigma(i),i} = 1, \text{ for all } i \in [k]$.
	The rest is to prove the converse by using Strategy~\ref{str}.
	
	If $\sigma \in S_k, r \in \mathbb{N} $ satisfies that $a^{(r)}_{\sigma(i),i} = 1, i = 1,2,\dots,k$, 
	we are going to prove the word $y_0 = \big(\sigma(1),\sigma(2),\dots,\sigma(k)\big)$ is in the answer set $P_r$. 
	The answer set $P_0 $ contains the word $y_0$ because the codemaker has not guessed.
	
	We provide the proof by contradiction. If $y_0 \notin P_r$, we assume that $y_0$ is removed from the answer set 
	in the $m$th round ($m\le r$), which means $y_0 \in P_{m-1}$ and $y_0 \notin P_{m}$.
	
	Let the guess and the feedback in the $m$th round be $x = (x_1,x_2,\dots,x_n)$ and 
	${\tau = (\tau_1,\tau_2,\dots,\tau_k)}$, respectively.
	Since \[a^{(r)}_{\sigma(i),i} = 1, i = 1,2,\dots,k,\]
	we know that 
	\begin{equation}
	a^{(m)}_{\sigma(i),i} = 1, i = 1,2,\dots,k .\label{equ:1} \tag{2}
	\end{equation}
	
	Let $S = \{i\in [k]:  x_i = \sigma(i)\}$, then from Equation~\eqref{equ:1}, $\tau_i =$ \Checkmark, $\text{ for all } i \in S$. 
	For all $i \in [k]-S$, $\tau_i $ is \XSolidBrush according to Strategy~\ref{str}. 
	We know $y_0$ is still possible to be the answer word, which means $y_0 \in~ P_m$. Contradiction! Thus $y_0 \in P_r$.
\end{proof}
Intuitively, the lemma claims that $a_{j,i}^{(r)} = 1$ means that $y_i = j$ has not been eliminated as a possible solution after $r$ rounds. Thus, the product $\prod_{i=1}^{k} a_{\sigma(i),i}$ is 1 means that the permutation $\sigma = \big(\sigma(1),\sigma(2),\dots,\sigma(k)\big)$ is consistent with all responses so far and it is still possible to be the answer.
We introduce the concept of the pernament of a matrix to better explain our proof.

The \textit{permanent} of a square 0-1 matrix $A = (a_{ji})_{k\times k}$ is defined as
\[ \operatorname{perm}(A)=\sum_{\sigma \in S_{k}} \prod_{i=1}^{k} a_{\sigma(i),i}. \]
In fact, if we define $\mathcal{A}_i = \{j :  a_{ji} = 1\}$, the permutation of $A$ is the number of 
\textit{systems of distinct representatives} of $\mathcal{A}_1,\mathcal{A}_2,\dots, \mathcal{A}_k$ in the language of combinatorics.

The permanent counts the number of permutations $\sigma$ such that the corresponding product $\prod_{i=1}^{k} a_{\sigma(i),i}$ equals 1.
It can be deduced that the permanent of $A^{(r)}$ is the number of words in $P_r$.  
Thus, our last goal is to prove that $\operatorname{perm}(A^{(k-2)})>1$ in order to prove Proposition~\ref{prop:2}.
\begin{lemma}
\label{lemma:1}
Given a square 0-1 matrix $A$ with a dimension of $k \ (k\ge 2)$, if $A$ has a non-zero permanent and there are 
precisely two 1s in each of the columns, then the permanent of the matrix is larger than one.
\end{lemma}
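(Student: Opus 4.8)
The plan is to read the permanent combinatorially and argue that one perfect matching is forced to yield a second one. Form the bipartite graph $G$ whose two vertex classes are the rows $R=\{r_1,\dots,r_n\}$ and the columns $C=\{c_1,\dots,c_n\}$ of the matrix, joining $r_i$ to $c_j$ exactly when $a_{ij}=1$. Then the permanent of the matrix equals the number of perfect matchings of $G$, the hypothesis ``precisely two $1$'s in each column'' says every vertex of $C$ has degree exactly $2$ in $G$, and ``non-zero permanent'' supplies a perfect matching $M$. So it is enough to produce a perfect matching $M'\neq M$: that shows the permanent is at least $2$.

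I would build $M'$ from an $M$-alternating cycle, obtained as follows. For a column $c$, write $M(c)\in R$ for its partner under $M$ and $c^{\#}\in R$ for its \emph{other} neighbour in $G$; this second neighbour exists and satisfies $c^{\#}\neq M(c)$ precisely because $c$ has exactly two neighbours. Since $M$ is perfect there is a unique column $\phi(c)$ with $M(\phi(c))=c^{\#}$, and this defines a map $\phi\colon C\to C$. The key point is that $\phi$ has no fixed point: $\phi(c)=c$ would give $M(c)=c^{\#}$, a contradiction. A fixed-point-free self-map of a finite set must contain a cycle $c_1\mapsto c_2\mapsto\cdots\mapsto c_\ell\mapsto c_1$ with $\ell\geq 2$ (iterate $\phi$ from any column; the orbit repeats, and the repeating part is not a single fixed point). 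Reading off the defining relations $c_i^{\#}=M(c_{i+1})$ (indices mod $\ell$), the vertex $M(c_{i+1})$ is joined to $c_i$ by a non-matching edge and to $c_{i+1}$ by a matching edge, so
\[
c_1\,-\,M(c_2)\,-\,c_2\,-\,M(c_3)\,-\,\cdots\,-\,c_\ell\,-\,M(c_1)\,-\,c_1
\]
is a cycle in $G$ of length $2\ell\geq 4$ that alternates between the matching edges $c_iM(c_i)$ and the non-matching edges $c_iM(c_{i+1})$. Replacing, along this cycle, the $\ell$ matching edges by the $\ell$ non-matching edges yields another perfect matching $M'$, and $M'\neq M$ because at least one edge (e.g.\ the one at $c_1$) was changed; hence the permanent is at least $2$.

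The argument is short, and the only thing needing care is the bookkeeping that makes $\phi$ well defined and fixed-point-free and that turns a $\phi$-cycle into a genuine (length $\geq 4$) alternating cycle rather than a degenerate ``doubled edge'' — but with $n\geq 2$ and each column carrying exactly two distinct $1$'s, all of this goes through. Equivalently, one can encode each column as an edge on the vertex set $R$, turning $G$ into an $n$-vertex, $n$-edge multigraph $H$; a perfect matching of $G$ is then an orientation of $H$ in which every in-degree equals $1$, which exists iff every connected component of $H$ is unicyclic, in which case the number of such orientations is $2^{\#\text{components}}\geq 2$. I would present whichever of these two formulations reads more cleanly.
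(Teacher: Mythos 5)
Your proof is correct, and it takes a genuinely different route from the paper's. The paper argues by induction on the dimension: if some row has only one $1$ it expands the permanent along that row (the hypothesis on the columns survives the deletion) and invokes the inductive hypothesis; in the remaining case every row and every column has exactly two $1$'s, and it exhibits a second permutation $\sigma_1$ directly by sending each row to its ``other'' $1$, checking injectivity of $\sigma_1$ from the column condition. You instead work non-inductively in the bipartite-graph picture: the permanent counts perfect matchings, the degree-$2$ condition on columns lets you build an $M$-alternating cycle from the fixed-point-free map $\phi$, and swapping along it produces a second matching. Your bookkeeping is sound -- $\phi$ is well defined and fixed-point-free, the functional graph of $\phi$ contains a cycle of length $\ell\ge 2$, the resulting cycle in $G$ has $2\ell\ge 4$ distinct vertices because $M$ is a matching and the $c_i$ are distinct, and the swapped edge set is again a perfect matching. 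What your approach buys is more than the lemma asks: your second formulation (columns as edges of a multigraph $H$ on the rows, matchings as in-degree-$1$ orientations) yields the exact value $\operatorname{perm}(A)=2^{\#\text{components of }H}$, whereas the paper's induction only delivers the bound $\ge 2$; your alternating-cycle idea also adapts immediately to the ``at least two $1$'s per column'' variant stated as \Cref{cor:1}, which the paper leaves unproved. What the paper's induction buys is that it stays entirely in matrix language and needs no graph-theoretic vocabulary.
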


Using Hall's marriage theorem \cite{hall1987representatives}, it is easy to obtain Lemma~\ref{lemma:1}. We provide another proof here.

\begin{proof}
We use induction on $k$.

If $k = 2$, trivial. 

Suppose that for $2\le k< k_0\ (k_0>2)$, the lemma holds. For $k = k_0$, either each row contains at least two 1s or 
there is at least a row with exactly one 1 because the permanent of the matrix is non-zero.
Suppose that there are at least two 1s in each row, from the condition that each column has 
precisely two 1s, we obtain that each column has exactly two 1s.

Because the permanent of the matrix is non-zero, there is a permutation $\sigma$ that satisfies $a_{\sigma(i),i} = 1 $ for each $i\in [k]$.
We define $\sigma_1$ by $\sigma_1(i) \ne \sigma(i)$ is the position of the other 1 in the $i$th column for each $i\in [k]$.
If there exists $i,j,b\in [k]$, such that $i\ne j$ and $\sigma_1(i) = \sigma_1(j) = b$, from the fact that $\sigma$ is a permutation we know that
there exists the only $l$ such that $\sigma(l) = b$. Additionally, from the definition of $\sigma_1$ we know $i \ne l$, $j\ne l$.
Then the $b$th row has three 1s on the $i$th column, the $j$th column, and the $l$th column. Contradiction! 

For all $i,j\in[k]$, if $i\ne j$, then $\sigma_1(i) \ne \sigma_1(j)$.
It implies that $\sigma_1$ is a permutation, $\operatorname{perm}(A) \ge 2$. 

Otherwise, we assume that in the $j$th row, there is only one 1. 
Assume that  $a_{ji} = 1$, then 
\[\operatorname{perm}(A) = a_{ji} \cdot \operatorname{perm}(A_0) = \operatorname{perm}(A_0),\]
where $A_0$ is the submatrix formed by deleting the $j$th row and $i$th column of $A$, 
a square 0-1 matrix with a dimension of $n-1$. By induction,  $\operatorname{perm}(A) = \operatorname{perm}(A_0) \ge 2$.
\end{proof} 

\begin{corollary}
\label{cor:1}
Given a square 0-1 matrix with a dimension of $k\ (k\ge2)$, if the matrix has a non-zero permanent 
and there are at least two 1s in each column, then the permanent of the matrix is larger than one.
\end{corollary}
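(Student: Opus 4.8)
The plan is to deduce \Cref{cor:1} from \Cref{lemma:1} by a pruning argument combined with the monotonicity of the permanent under entrywise domination. First I would fix a square binary matrix $A = (a_{ij})_{n\times n}$ with $\operatorname{perm}(A) \neq 0$ and at least two `$1$' entries in every column. Since the permanent is a positive integer, there exists a permutation $\sigma_0 \in S_n$ with $a_{i,\sigma_0(i)} = 1$ for all $i \in [n]$, exactly as in the bijection recorded in \Cref{rem:perm}.

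Next I would construct an auxiliary binary matrix $B = (b_{ij})_{n\times n}$ as follows. For each column $j \in [n]$, the entry $a_{\sigma_0^{-1}(j),\,j}$ equals `$1$'; since column $j$ of $A$ has at least two `$1$' entries, pick a further row $i'_j$ with $a_{i'_j,j} = 1$ and $i'_j \neq \sigma_0^{-1}(j)$. Put $b_{\sigma_0^{-1}(j),\,j} = b_{i'_j,\,j} = 1$ and set every other entry of column $j$ of $B$ to `$0$'. Then $B$ is a square binary matrix with exactly two `$1$' entries in each column, $b_{ij} \le a_{ij}$ for all $i,j$, and $\operatorname{perm}(B) \ge 1$ because $\sigma_0$ still contributes a nonzero term. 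Hence $\operatorname{perm}(B)$ is nonzero, so $B$ meets the hypotheses of \Cref{lemma:1} and therefore $\operatorname{perm}(B) \ge 2$.

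Finally I would invoke monotonicity: since $0 \le b_{ij} \le a_{ij}$ for all $i,j$, each summand $\prod_{i=1}^{n} b_{i,\sigma(i)}$ in $\operatorname{perm}(B)$ is bounded above by the corresponding summand $\prod_{i=1}^{n} a_{i,\sigma(i)}$ in $\operatorname{perm}(A)$, so $\operatorname{perm}(A) \ge \operatorname{perm}(B) \ge 2$, which is the desired conclusion.

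I do not expect a real obstacle here. The single point needing care is verifying that $B$ indeed has exactly two `$1$' entries per column — that the extra `$1$' chosen in column $j$ is distinct from the $\sigma_0$-entry, which is precisely what the ``at least two `$1$''' hypothesis guarantees — together with the observation that \Cref{lemma:1} places no requirement on the rows of $B$, so the possibly empty rows of $B$ are harmless. An alternative, slightly longer route would be to repeat the induction of \Cref{lemma:1} directly, casing on whether some row contains a single `$1$', but the pruning-plus-monotonicity argument above is shorter and reuses \Cref{lemma:1} as a black box.
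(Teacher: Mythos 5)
Your argument is correct. The paper actually states \Cref{cor:1} without any proof, treating it as immediate from \Cref{lemma:1}; your pruning construction (keep exactly two `$1$' per column, one of them on a permutation $\sigma_0$ witnessing $\operatorname{perm}(A)\neq 0$) followed by entrywise monotonicity of the permanent is precisely the natural deduction the authors presumably had in mind, and every step — in particular that the pruned matrix $B$ satisfies the hypotheses of \Cref{lemma:1} and that $\operatorname{perm}(A)\ge\operatorname{perm}(B)\ge 2$ — checks out.
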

Finally, we provide the proof of Proposition~\ref{prop:2}.

\begin{proof}
According to Strategy~\ref{str}, each column of the tracking matrix $A^{(j)}$ has at least $(n-j)$ 1s and has a positive permanent after $j$ rounds. Thus, after $(k-2)$ rounds, the tracking matrix $A^{(k-2)}$ satisfies the conditions in Corollary~\ref{cor:1}. 
The permanent of the matrix $A^{(k-2)}$ is larger than one, which means the answer set $P_{k-2}$ contains more than one word by Lemma~\ref{lemma:2}.
Thus, Proposition~\ref{prop:2} is proved. 
\end{proof}

By Remark~\ref{remark:1}, Proposition~\ref{prop:1} is proved. 
From the claim in the start of this section, the first case of Theorem~\ref{thm:1} holds, i.e.,
\begin{equation*}
F(k,n) = k, \text{ for all } k\le n.
\end{equation*}

\section{The Second Case.}
For the case $k> n$ and $n$ divides $k$, we are going to prove 
\[F(k,n) = \dfrac{k}{n}+n-1.  \]

Similar to the analysis in the first case, the codemaker only needs to provide feedback by using a specific 
strategy after each round and ensure that the answer set is not empty.
\begin{lemma}
\label{lemma:3}
For Clear Mastermind with $k$ positions, $k$ letters, and the public information that the answer is a 
permutation of $\{1,2,\dots,k\}$, if there exists $ i,j \in \{1,2,\dots,k\}$ such that the codebreaker knows 
the $i$th letter of the answer is not $j$, then the codebreaker can solve the puzzle within $k-1$ rounds.
\end{lemma}

\begin{proof}
Without loss of generality, we assume that $i = j =1$. Then, in the $r$th round when $r<k-1$, 
the codebreaker guesses $\big((r+1),r,r,\dots, r\big) \in [k]^k$ until the first position of the feedback is Green. 
If it appears, the letter $r+1$ won't appear in other positions. The codebreaker can simply guess 
${\big((r+2),(r+2),\dots,(r+2)\big)}$, ${\big((r+3),(r+3),\dots,(r+3)\big)}$ for the following rounds until the $(k-2)$th round. 
The answer set $P_{k-2}$ will contain only one word.

Otherwise, after $k-2$ rounds, the answer set $P_{k-2}$ also contains only one word. Thus Lemma~\ref{lemma:3} holds.
\end{proof}

Next we prove that when $k> n$ and $n$ divides $k$,
\[F(k,n) = \dfrac{k}{n}+n-1.\]
\begin{proof}
\label{proof:32}

On the one hand, in the initial $\frac{k}{n}-1$ rounds, the codemaker could provide feedback only containing Black 
because there are $k$ kinds of letters in total and each guess contains $n$ letters at most. 
Consequently, the best case for the codebreaker after $\frac{k}{n}-1$ rounds is equivalent to a Clear Mastermind game 
with $n$ positions and $n$ letters, 
which needs $n$ more rounds to solve. Consequently,
\[F(k,n)\ge \dfrac{k}{n}+n-1.\]
On the other hand, in the $r$th round when $r\le \frac{k}{n}$, the codebreaker could guess the word
${\big((r-1)\cdot n+1,(r-1)\cdot n+2,\dots,r\cdot n\big)}$. 
The codebreaker then knows all of the letters that appear in the answer after $\frac{k}{n}$ rounds. 

If there are fewer than $n$ kinds of letters appearing in the answer, the codebreaker needs no more than $n-1$ rounds 
to solve the puzzle since 
\[F(m,n) \le m \le n-1,\text{ for all } m<n,\]
according to the bound~\eqref{equation:1}.

Otherwise, there are $n$ kinds of letters appearing in the answer. 
From the initial $\frac{k}{n}$ guesses, we have obtained the letters appearing in the answer.  
If at least one Green appears in the feedback from the initial $\frac{k}{n}$ rounds, 
the codebreaker needs no more than $G(n-1) = n-1$ rounds to solve the puzzle. 
Otherwise, the feedback from the initial $\frac{k}{n}$ rounds has at least one Yellow.
Then by Lemma~\ref{lemma:3}, the codebreaker also needs $n-1$ rounds to solve the puzzle.
\end{proof}
Above all, we prove that 
\[F(k,n) = \dfrac{k}{n}+n-1, \text{ when }k> n \text{ and } n \text{ divides } k\]

\section{The Third Case and Future Work.}
Since
\[F(k,n)\le F(k+1,n),\quad \text{ for all } k,n \in \mathbb{N}^*,\]
and 
\[F(k,n) = \dfrac{k}{n}+n-1, \text{ when }k>n \text{ and } n \text{ divides } k\]

then the third case of Theorem~\ref{thm:1} holds, i.e.,
\[ \left\lfloor \frac{k}{n} \right\rfloor +n-1\le F(k,n)\le \left\lfloor \frac{k}{n} \right\rfloor +n, \text{ when }k>n \text{ and } n \text{ does not divide } k.\]

To obtain the exact result, we simulated the game using a computer program and obtained the results in Table~\ref{table:1} when $n<k<2n$.
\begin{table}[ht]
\begin{tabular}{|l|c|c|}
\hline
\rowcolor[gray]{.9}
\hline &$F(k,n) = n$ &$F(k,n) = n+1$ \\
\hline $n = 2$& &$F(3,2) = 3$\\
\hline $n = 3$&$F(4,3) = 3$&$F(5,3) = 4$\\
\hline $n = 4$&$F(5,4) = F(6,4)=F(7,4) = 4$&\\
\hline $n = 5$&$F(6,5) =F(7,5) =F(8,5) =F(9,5) = 5$&\\
\hline
\end{tabular}
\\
\caption{Results of Clear Mastermind calculated by the computer when ${n<k<2n}$.}
\label{table:1}
\end{table}
Denote by $\mathbf{N} $ the number of the words in the game with $k$ letters and $n$ positions, which is $k^n$.
The complexity of the program is $O (\mathbf{N}^2)$, making it challenging to compute $F(2n-1,n)$ when $n>5$.

We speculate that 
\[F(2n-1,n) = n, \text{ for all } n>3.\]
It doesn't hold for smaller $n$. We suspect that the reason is when $n$ is small, the feedback of Green provides little information about 
the other letters of the answer. For example, if the feedback contains two types of Green, representing whether the letter exists in other positions, 
we can verify that $F(3,2) = 2 \text{ and } F(5,3) = 3$. 

However, it is difficult to prove our speculation by simplifying the game as we did in the first case. 
In Section~\ref{chap:firstcase}, we demonstrated that it is sufficient to prove $F(k,n)\ge k$ when $k\le n$, 
which means we need to prove the codebreaker cannot solve the Clear Mastermind 
with $k$ letters and $n$ positions in $k-1$ rounds if $k\le n$.
Thus we can make the Clear Mastermind easier to solve by providing the codebreaker with more information, 
which also helps us to show that the codebreaker still cannot win in $k-1$ rounds. 
However, to prove $F(2n-1,n) = n, \text{ for all } n>3$, we need to 
prove $F(2n-1,n)< n+1,\text{ for all } n>3$ according to the third case of Theorem~\ref{thm:1}.
One common approach is to make the game more difficult in some way that the codebreaker can still win in $n$ rounds. 
This is more challenging than the former case.

Another potential extension that follows from our study is the relationship between $F(k+n,n)$ and $F(k,n)$ 
when $k > n$ and $n$ does not divide $k$. It is trivial that ${F(k+n,n)\ge F(k,n)+1}$. 
However, we do not know whether the speculation $F(k+n,n)\le F(k,n)+1$ is true.


\section{Conclusion.}
In this paper, we introduced Clear Mastermind, a combination of Mastermind and Wordle. 
We investigated the fewest number of guesses that a codebreaker required to find the answer in the game according to its two parameters: 
the number of letters and the length of the answer. 

In the first case where $k\le n$, we simplified the game by switching the feedback, providing more information to the codebreaker, 
proposing a feedback strategy and introducing a tracking matrix. We found the fewest number of guesses required in the end.

In the second case where $k > n$ and $n$ divides $k$, the result is mainly based on the thoughts in the first case.

In the third case where $k > n$ and $n$ does not divide $k$, the result based on the first two cases is given. However, there is still
further work to do.

\begin{acknowledgment}{Acknowledgment.}
    The authors would like to thank their supervisor, Professor Liang Yu, for his insightful suggestions and encouragement. He pointed out some mistakes in a previous version of the paper and guided the authors to find another elegant and accurate proof. The authors also extend their sincere thanks to the anonymous referees for their constructive suggestions that led to a significant improvement of the paper.
\end{acknowledgment}

\bibliographystyle{abbrv}
\bibliography{reference}

\vfill\eject

\end{document}